%
%

\documentclass[12pt,a4paper,oneside]{amsart}
\usepackage[a4paper]{geometry}
\geometry{hmargin=1.8cm,top=2cm,bottom=2.5cm}

\usepackage{mathptmx} 

\usepackage{amssymb,enumerate,mathrsfs,tikz-cd}

\newcommand{\vertbar}{\>|\>}
\newcommand{\set}[2]{\ensuremath{\{ #1 \vertbar #2 \}}}
\newcommand{\modulus}[1]{\ensuremath{|\, #1 \,|}}

\DeclareMathOperator{\Ker}{Ker}

\newtheorem*{theorem}{Theorem}
\newtheorem{proposition}{Proposition}
\newtheorem*{lemma}{Lemma}

\setcounter{subsection}{-1}

\begin{document}

\title{On the utility of Robinson--Amitsur ultrafilters. III}
\author{Pasha Zusmanovich}
\address{University of Ostrava, Czech Republic}
\email{pasha.zusmanovich@osu.cz}
\date{First written August 6, 2020; last minor revision November 12, 2022}
\thanks{Periodica Mathematica Hungarica, to appear}

\begin{abstract}
We give a new proof of a version of the main theorem of the previous paper in 
the series about embedding of an algebraic system into ultraproducts.
\end{abstract}

\maketitle

\subsection{Introduction}

This note is a postscript to the earlier paper \cite{ultra}. Theorem 1 in that
paper asserts that, given a cardinal $\kappa$, an embedding of a 
$\kappa$-subdirectly irreducible algebraic system into a direct product can be 
factored to get an embedding into an ultraproduct over a $\kappa$-complete 
ultrafilter. However, the proof needed an assumption that $\kappa$ is a strongly
compact cardinal (i.e., that any $\kappa$-complete filter can be extended to a 
$\kappa$-complete ultrafilter), an assumption which, as noted in 
\cite[\S 3]{ultra}, brings us to deep waters of set theory. So it seems to be
natural to try to get rid of this assumption. This is what we are doing in this
note, at the expense of somewhat narrowing the scope: in addition to 
$\kappa$-subdirect irreducibility, we require a sort of a stronger version of
subdirect irreducibility, dubbed by us ``indecomposability'' (for an exact 
definition of this notion and its properties, see \S \ref{sec-decomp}; we 
emphasize that this is a version of the finite subdirect irreducibility, and not
of $\kappa$-subdirect irreducibility).

Another interesting feature of the proof presented here is the following. 
Another theorem proved in \cite{ultra}, Theorem 2, amounts to a statement 
``dual'', in a sense, to Theorem 1, where embeddings are replaced by surjections
(``dual'', and not dual in the strict sense, as the direct products are not 
replaced by the direct sums, and not all arrows are reversed). Theorem 1 was 
proved using what we have dubbed as the ``Robinson--Amitsur ultrafilter'', a way
to construct the desired ultrafilter borrowed from the old papers by S. Amitsur
and A. Robinson in the context of ring theory (as exemplified in the classical
textbook \cite{jacobson}, see proofs of Propositions 2.1 and 2.2 at p.~77); the
proof of Theorem 2 was an adaptation of the group- and set-theoretic arguments from the 
recent papers by G.~Bergman and N.~Nahlus (\cite{bergman-g} being one of them).
It was stated in \cite{ultra}, that though the statements of the two theorems 
sound very similar, the proofs are different and each proof cannot be adapted to
the ``dual'' situation. The present notes shows the last assertion to be wrong,
at least partially: we are giving a proof of a version of Theorem~1 using 
reasonings very similar to those used in the proof of Theorem 2. 

\subsection{Notation, definitions, conventions}\label{sec-def}

We assume basic knowledge of universal algebra and set theory, see 
\cite[\S 1]{ultra} for further details. All algebraic systems are of the same 
fixed (but arbitrary) signature. 

Our convention for writing composition of maps is from left to right, i.e., if
$X \overset{f}{\to} Y \overset{g}{\to} Z$, then 
$X \overset{f \circ g}{\longrightarrow} Z$ with $(f \circ g)(x) = g(f(x))$.

Given a family of algebraic systems $\{B_i\}_{i\in \mathbb I}$ indexed by a set
$\mathbb I$, and an ultrafilter $\mathscr U$ on $\mathbb I$, the symbols
$\prod_{i \in \mathbb I} B_i$ and $\prod_{\mathscr U} B_i$ denote the direct 
product, and the ultraproduct over $\mathscr U$, respectively.

Let $\mathbb J \subseteq \mathbb I$. Denote by
$$
p_{\mathbb I,\mathbb J}: \prod_{i\in \mathbb I} B_i \to 
\prod_{i\in \mathbb J} B_i 
$$
the canonical projection, defined for any 
$b\in \prod_{i\in \mathbb I} B_i$ by the formula
$$
(p_{\mathbb I,\mathbb J}(b))(i) = b(i) \text{ if } i\in \mathbb J .
$$

For any two sets $\mathbb I$, $\mathbb J$, and their disjoint union 
$\mathbb I \sqcup \mathbb J$, by
$$
d_{\mathbb I,\mathbb J}: 
(\prod_{i\in \mathbb I} B_i) \times (\prod_{i\in \mathbb J} B_i)
\simeq \prod_{i\in \mathbb I \sqcup \mathbb J} B_i
$$
we denote the canonical isomorphism.

For an algebraic system $A$, $\Delta_A: A \to A \times A$ (or just $\Delta$, if
there is no danger of confusion) denotes the diagonal map.

\subsection{Subdirect irreducibility, indecomposable embeddings}
\label{sec-decomp}

Let
\begin{equation}\label{eq-f}
f: A \hookrightarrow \prod_{i \in \mathbb I} B_i
\end{equation}
be an embedding of an algebraic system $A$ into the direct product of a family 
of algebraic systems. For such an embedding, we associate the set 
$\mathscr U(f)$ of subsets $\mathbb J \subseteq \mathbb I$ satisfying the 
following property: $f$ factors through the projection $p_{\mathbb I,\mathbb J}$, i.e., there exists an
embedding $f_{\mathbb J}: A \to \prod_{i\in \mathbb J} B_i$ such that the 
diagram
\begin{equation*}
\begin{tikzcd}[column sep=large,row sep=large]
A \arrow{rd}[swap]{f_{\mathbb J}} \arrow{r}{f} & 
                                    \prod\limits_{i\in \mathbb I} B_i 
                                    \arrow{d}{p_{\mathbb I,\mathbb J}} \\
                                  & \prod\limits_{i\in \mathbb J} B_i 
\end{tikzcd}
\end{equation*}
is commutative.

In what follows, we will repeatedly use the following simple

\begin{lemma}
For any embedding $f$ of the kind (\ref{eq-f}), $\mathscr U(f)$ is upward 
closed.
\end{lemma}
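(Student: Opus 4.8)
The plan is to exhibit, for any $\mathbb K$ with $\mathbb J \subseteq \mathbb K \subseteq \mathbb I$ and any $\mathbb J \in \mathscr U(f)$, a single canonical factoring map witnessing $\mathbb K \in \mathscr U(f)$. First I would record the obvious transitivity of the canonical projections: directly from the defining formula $(p_{\mathbb I,\mathbb J}(b))(i) = b(i)$ for $i \in \mathbb J$, one has $p_{\mathbb I,\mathbb J} = p_{\mathbb I,\mathbb K} \circ p_{\mathbb K,\mathbb J}$ whenever $\mathbb J \subseteq \mathbb K \subseteq \mathbb I$, since evaluating either side at $b$ and then at a coordinate $i \in \mathbb J$ returns $b(i)$ in both cases.

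Next, take $\mathbb J \in \mathscr U(f)$ with factoring embedding $f_{\mathbb J} : A \to \prod_{i\in\mathbb J} B_i$ satisfying $f \circ p_{\mathbb I,\mathbb J} = f_{\mathbb J}$, and let $\mathbb J \subseteq \mathbb K \subseteq \mathbb I$. Define the candidate $f_{\mathbb K} := f \circ p_{\mathbb I,\mathbb K} : A \to \prod_{i\in\mathbb K} B_i$. This is a homomorphism, and the triangle with $f$, $p_{\mathbb I,\mathbb K}$, $f_{\mathbb K}$ commutes by construction, so the only thing left to check is that $f_{\mathbb K}$ is injective. Composing with $p_{\mathbb K,\mathbb J}$ and using the transitivity of projections, $f_{\mathbb K} \circ p_{\mathbb K,\mathbb J} = f \circ p_{\mathbb I,\mathbb K} \circ p_{\mathbb K,\mathbb J} = f \circ p_{\mathbb I,\mathbb J} = f_{\mathbb J}$, which is an embedding by hypothesis. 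Since any map whose composite with a further map is injective must itself be injective, $f_{\mathbb K}$ is injective, hence an embedding, and therefore $\mathbb K \in \mathscr U(f)$.

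There is essentially no real obstacle here; this is the kind of bookkeeping lemma whose proof is one line of diagram-chasing. The one point that demands a little care is the left-to-right composition convention fixed in \S\ref{sec-def}: the factoring map must be $f \circ p_{\mathbb I,\mathbb K}$ (first $f$, then the projection), and the commutativity asserted in the definition of $\mathscr U(f)$ reads $f \circ p_{\mathbb I,\mathbb J} = f_{\mathbb J}$, not the reverse. Once the convention is pinned down, the argument reduces to the two displayed identities above, and no injectivity or surjectivity hypothesis on the $B_i$ or on $\mathbb I$ is needed.
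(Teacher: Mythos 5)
Your proof is correct and is essentially the paper's own argument: both define $f_{\mathbb K} = f \circ p_{\mathbb I,\mathbb K}$, use the transitivity $p_{\mathbb I,\mathbb K} \circ p_{\mathbb K,\mathbb J} = p_{\mathbb I,\mathbb J}$, and conclude injectivity of $f_{\mathbb K}$ from that of $f_{\mathbb J}$. The only cosmetic difference is that the paper phrases the last step via kernels, squeezing $\Ker(f \circ p_{\mathbb I,\mathbb K})$ between $\Delta(A)$ and $\Ker(f \circ p_{\mathbb I,\mathbb J}) = \Delta(A)$, whereas you argue directly that a map is injective when some post-composite of it is.
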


\begin{proof}
Let $\mathbb J \in \mathscr U(f)$, and 
$\mathbb J \subseteq \mathbb K \subseteq \mathbb I$. The condition 
$\mathbb J \in \mathscr U(f)$ is equivalent to that 
$f \circ p_{\mathbb I,\mathbb J}$ is an embedding, i.e., 
$\Ker(f \circ p_{\mathbb I,\mathbb J}) = \Delta(A)$. We have 
$$
\Delta(A) \subseteq \Ker(f \circ p_{\mathbb I,\mathbb K}) \subseteq 
\Ker(f \circ p_{\mathbb I,\mathbb K} \circ p_{\mathbb K,\mathbb J}) =
\Ker(f \circ p_{\mathbb I,\mathbb J}) = \Delta(A) ,
$$
and hence $\mathbb K \in \mathscr U(f)$, as required.
\end{proof}

Recall that given a cardinal $\kappa$, an algebraic system $A$ is called 
\emph{$\kappa$-subdirectly irreducible}, if for any embedding (\ref{eq-f}) of 
$A$ into the direct product of $< \kappa$ algebraic systems (i.e., with 
$\modulus{\mathbb I} < \kappa$), there is $i_0 \in \mathbb I$ such that 
$\{i_0\} \in \mathscr U(f)$. If $\kappa = \omega$ (i.e., only embeddings into 
finite direct products are considered), then $A$ is called 
\emph{finitely subdirectly irreducible}.

If $A$ is finitely subdirectly irreducible, then due to the canonical 
isomorphism $d_{\mathbb J,\mathbb I \backslash \mathbb J}$ which holds for any 
subset $\mathbb J \subseteq \mathbb I$, we have that either 
$\mathbb J \in \mathscr U(f)$, or 
$\mathbb I \backslash \mathbb J \in \mathscr U(f)$. In general, both these 
inclusions can hold simultaneously. We will call an embedding (\ref{eq-f})
\emph{decomposable}, if this is indeed the case, i.e., there is 
$\mathbb J \subset \mathbb I$, such that both $\mathbb J$ and 
$\mathbb I \backslash \mathbb J$ belong to $\mathscr U(f)$. Otherwise -- the
situation we are interested in -- if for any $\mathbb J \subseteq \mathbb I$, 
exactly one of the two inclusions $\mathbb J \in \mathscr U(f)$ and 
$\mathbb I \backslash \mathbb J \in \mathscr U(f)$ holds, an embedding 
(\ref{eq-f}) will be called \emph{indecomposable}.

Indecomposability is a strong condition, and far from every embedding of the 
kind (\ref{eq-f}) is indecomposable: a trivial example is the diagonal embedding
. Moreover, any embedding of the kind (\ref{eq-f}) 
may be ``doubled'' to a decomposable embedding in a trivial way, by taking the 
composition with the diagonal: 
$$
A \overset{\Delta}{\to} A \times A \overset{(f,f)}{\longrightarrow} 
(\prod_{i\in \mathbb I} B_i) \times (\prod_{i\in \mathbb I} B_i) .
$$

The following proposition shows that this is, essentially, how any decomposable
embedding can be obtained.

\begin{proposition}
A decomposable embedding $f: A \hookrightarrow \prod_{i \in \mathbb I} B_i$ can
be represented in the form $\Delta \circ g$ for some embedding 
$g: A \times A \hookrightarrow \prod_{i \in \mathbb I} B_i$.
\end{proposition}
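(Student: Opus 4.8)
The plan is to read the map $g$ straight off the two factorizations that decomposability hands us, and to glue them by the canonical identification $d$. Fix a set $\mathbb J \subset \mathbb I$ witnessing that $f$ is decomposable, so that both $\mathbb J$ and $\mathbb I\backslash\mathbb J$ lie in $\mathscr U(f)$; let $f_{\mathbb J}: A \hookrightarrow \prod_{i\in\mathbb J} B_i$ and $f_{\mathbb I\backslash\mathbb J}: A \hookrightarrow \prod_{i\in\mathbb I\backslash\mathbb J} B_i$ be the associated embeddings, which by the definition of $\mathscr U(f)$ satisfy $f\circ p_{\mathbb I,\mathbb J} = f_{\mathbb J}$ and $f\circ p_{\mathbb I,\mathbb I\backslash\mathbb J} = f_{\mathbb I\backslash\mathbb J}$ (these are exactly the embeddings read off the two commuting triangles, with the paper's left-to-right convention, as in the proof of the Lemma). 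I would then define $g: A\times A \to \prod_{i\in\mathbb I} B_i$ by
$$
g(a,a') = d_{\mathbb J,\mathbb I\backslash\mathbb J}\big(f_{\mathbb J}(a),\, f_{\mathbb I\backslash\mathbb J}(a')\big), \qquad a,a'\in A ,
$$
i.e. $g = (f_{\mathbb J}\times f_{\mathbb I\backslash\mathbb J})\circ d_{\mathbb J,\mathbb I\backslash\mathbb J}$.

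Three things then need checking, all routine. First, $g$ is a homomorphism, being the composite of the homomorphism $f_{\mathbb J}\times f_{\mathbb I\backslash\mathbb J}$ (a product of two homomorphisms) with the isomorphism $d_{\mathbb J,\mathbb I\backslash\mathbb J}$. Second, $g$ is injective: if $g(a_1,a_2) = g(b_1,b_2)$, then applying the inverse of $d_{\mathbb J,\mathbb I\backslash\mathbb J}$ and the two coordinate projections gives $f_{\mathbb J}(a_1) = f_{\mathbb J}(b_1)$ and $f_{\mathbb I\backslash\mathbb J}(a_2) = f_{\mathbb I\backslash\mathbb J}(b_2)$, whence $a_1 = b_1$ and $a_2 = b_2$ because $f_{\mathbb J}$ and $f_{\mathbb I\backslash\mathbb J}$ are embeddings; so $g$ is an embedding. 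Third, $\Delta\circ g = f$: for $a\in A$,
$$
(\Delta\circ g)(a) = g(a,a) = d_{\mathbb J,\mathbb I\backslash\mathbb J}\big(p_{\mathbb I,\mathbb J}(f(a)),\, p_{\mathbb I,\mathbb I\backslash\mathbb J}(f(a))\big) = f(a) ,
$$
the last equality being exactly the defining property of the canonical isomorphism $d_{\mathbb J,\mathbb I\backslash\mathbb J}$, namely that $b\in\prod_{i\in\mathbb I} B_i$ is recovered from the pair consisting of its restriction to $\mathbb J$ and its restriction to $\mathbb I\backslash\mathbb J$.

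I do not expect a genuine obstacle here: the whole content of the statement is that decomposability supplies the factorizations through $p_{\mathbb I,\mathbb J}$ and through $p_{\mathbb I,\mathbb I\backslash\mathbb J}$ \emph{simultaneously}, and once both are in hand the map $g$ is essentially forced by the equation $\Delta\circ g = f$ together with the splitting $d_{\mathbb J,\mathbb I\backslash\mathbb J}$. The only points where a little care is warranted are matching the left-to-right composition convention when extracting $f_{\mathbb J} = f\circ p_{\mathbb I,\mathbb J}$ from the commuting triangle, and recording that $d_{\mathbb J,\mathbb I\backslash\mathbb J}$ is inverse to the map $b\mapsto \big(p_{\mathbb I,\mathbb J}(b),\, p_{\mathbb I,\mathbb I\backslash\mathbb J}(b)\big)$.
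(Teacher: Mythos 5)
Your proof is correct and follows essentially the same route as the paper: the author likewise sets $g = (f_{\mathbb J}, f_{\mathbb I \backslash \mathbb J}) \circ d_{\mathbb J,\mathbb I \backslash \mathbb J}$ and observes that $f = \Delta \circ g$. You have merely spelled out the routine verifications (injectivity of $g$, the homomorphism property, and the identity $\Delta \circ g = f$) that the paper leaves implicit, and you have handled the left-to-right composition convention correctly.
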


\begin{proof}
By definition, there exists $\mathbb J \subset \mathbb I$ such that 
$\mathbb J, \mathbb I \backslash \mathbb J \in \mathscr U(f)$. 
Then 
$f = \Delta \circ (f_{\mathbb J}, f_{\mathbb I \backslash \mathbb J}) \circ 
     d_{\mathbb J,\mathbb I \backslash \mathbb J}$; thus we can put 
$g = (f_{\mathbb J}, f_{\mathbb I \backslash \mathbb J}) \circ 
     d_{\mathbb J,\mathbb I \backslash \mathbb J}$.
\end{proof}

In a somewhat opposite direction, under certain conditions an embedding of a 
finitely subdirectly irreducible algebraic system can be ``thinned'' to an 
indecomposable one.

\begin{proposition}
Let $A$ be non-trivial finitely subdirectly irreducible algebraic system, and 
$f: A \hookrightarrow \prod_{i\in \mathbb I} B_i$ an embedding such that the 
intersection of elements of any descending chain in $\mathscr U(f)$ lies in 
$\mathscr U(f)$. Then there is $\mathbb J \subseteq \mathbb I$ such that 
$f \circ p_{\mathbb I,\mathbb J}$ is an indecomposable embedding.
\end{proposition}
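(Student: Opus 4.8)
The plan is to apply Zorn's lemma to the collection of subsets $\mathbb J \subseteq \mathbb I$ lying in $\mathscr U(f)$, ordered by reverse inclusion, in order to extract a minimal element, and then to argue that the restricted embedding over such a minimal $\mathbb J$ is automatically indecomposable. First I would observe that $\mathbb I \in \mathscr U(f)$ (witnessed by $f$ itself), so the poset is non-empty; the hypothesis that the intersection of any descending chain in $\mathscr U(f)$ again lies in $\mathscr U(f)$ is exactly the statement that every chain has an upper bound in the reverse-inclusion order, so Zorn's lemma yields a $\subseteq$-minimal $\mathbb J \in \mathscr U(f)$. Write $g = f \circ p_{\mathbb I,\mathbb J} = f_{\mathbb J} : A \hookrightarrow \prod_{i\in\mathbb J} B_i$ for the corresponding embedding; note $\mathscr U(g)$ consists precisely of those $\mathbb K \subseteq \mathbb J$ with $\mathbb K \in \mathscr U(f)$ (this needs a small verification using the factorization of projections $p_{\mathbb I,\mathbb K} = p_{\mathbb I,\mathbb J}\circ p_{\mathbb J,\mathbb K}$ together with the Lemma on upward closure), and by minimality of $\mathbb J$ the only element of $\mathscr U(g)$ is $\mathbb J$ itself.

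Next I would check indecomposability of $g$ directly. Since $A$ is finitely subdirectly irreducible, for every $\mathbb K \subseteq \mathbb J$ we already know (via the isomorphism $d_{\mathbb K,\mathbb J\setminus\mathbb K}$, as recalled in the text) that at least one of $\mathbb K \in \mathscr U(g)$ and $\mathbb J\setminus\mathbb K \in \mathscr U(g)$ holds. It remains to rule out that both hold simultaneously for some proper, non-empty $\mathbb K$. If $\mathbb K \in \mathscr U(g)$ then $\mathbb K = \mathbb J$ by minimality, so $\mathbb J \setminus \mathbb K = \varnothing$; and if $\mathbb J \setminus \mathbb K \in \mathscr U(g)$ then likewise $\mathbb J\setminus\mathbb K = \mathbb J$, i.e. $\mathbb K = \varnothing$. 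Hence both can hold only in the degenerate cases $\mathbb K = \varnothing$ or $\mathbb K = \mathbb J$. To conclude that $g$ is genuinely indecomposable I must also verify that $\varnothing \notin \mathscr U(g)$: an embedding $A \to \prod_{i\in\varnothing} B_i$ into a one-element (trivial) system exists only if $A$ is trivial, which is excluded by hypothesis. Therefore for every $\mathbb K \subseteq \mathbb J$ exactly one of $\mathbb K, \mathbb J\setminus\mathbb K$ belongs to $\mathscr U(g)$, which is the definition of indecomposability.

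The main obstacle I anticipate is bookkeeping around the identification $\mathscr U(f \circ p_{\mathbb I,\mathbb J}) = \{\mathbb K \subseteq \mathbb J : \mathbb K \in \mathscr U(f)\}$ and making sure minimality of $\mathbb J$ in $\mathscr U(f)$ transfers correctly; one direction is immediate from composing projections, while the other uses that $f$ factors through $p_{\mathbb I,\mathbb J}$ to push a factorization over $\mathbb K$ back up to $\mathbb I$. A secondary subtlety is the edge case $\mathbb J = \varnothing$: the non-triviality assumption on $A$ together with $\mathbb J \in \mathscr U(f)$ forces $\mathbb J \neq \varnothing$, so the minimal $\mathbb J$ produced is non-empty and the argument above is not vacuous. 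Once these points are handled, the statement follows by taking this minimal $\mathbb J$.
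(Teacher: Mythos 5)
Your argument is correct, but it takes a genuinely different route from the paper's. The paper applies (the dual form of) Zorn's lemma not to $\mathscr U(f)$ itself but to the set $\mathcal S=\set{\mathbb J \subseteq \mathbb I}{\mathbb J,\ \mathbb I\setminus\mathbb J\in\mathscr U(f)}$ of subsets witnessing decomposability; this costs an extra verification that $\mathcal S$ is closed under intersections of descending chains (the complement of the intersection is the union of the complements, which lies in $\mathscr U(f)$ by upward closure), and the minimal element $\mathbb J_0$ of $\mathcal S$ so obtained is then shown to work by essentially your transfer computation $f_{\mathbb J_0}\circ p_{\mathbb J_0,\mathbb K}=f\circ p_{\mathbb I,\mathbb K}$, followed by the same appeal to non-triviality to exclude $\varnothing$. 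You instead apply Zorn directly to $\mathscr U(f)$, where the chain hypothesis applies verbatim, and your identification $\mathscr U(f\circ p_{\mathbb I,\mathbb J})=\{\mathbb K\subseteq\mathbb J : \mathbb K\in\mathscr U(f)\}$ together with minimality gives $\mathscr U(f\circ p_{\mathbb I,\mathbb J})=\{\mathbb J\}$. This is simpler, and it actually proves more than you state: combining $\mathscr U(f\circ p_{\mathbb I,\mathbb J})=\{\mathbb J\}$ with the ``at least one of $\mathbb K$, $\mathbb J\setminus\mathbb K$ belongs to $\mathscr U$'' consequence of finite subdirect irreducibility forces every subset of $\mathbb J$ to be $\varnothing$ or $\mathbb J$, i.e.\ your minimal $\mathbb J$ is necessarily a singleton $\{i_0\}$, so under the hypotheses of the proposition $A$ in fact embeds into a single factor $B_{i_0}$ (for which indecomposability is immediate). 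The paper's minimal element of $\mathcal S$ need not be a singleton, so its proof produces a possibly larger indecomposable restriction but only the stated conclusion. Your one stylistic slip is the sentence claiming both memberships ``can hold only in the degenerate cases'': if both held you would get $\mathbb K=\mathbb J$ and $\mathbb K=\varnothing$ simultaneously, which already contradicts $\mathbb J\neq\varnothing$; but since you then correctly verify $\varnothing\notin\mathscr U(f\circ p_{\mathbb I,\mathbb J})$ from non-triviality of $A$, the conclusion stands.
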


\begin{proof}
Consider the set 
$\mathcal S = \set{\mathbb J \subseteq \mathbb I}
{\mathbb J, \> \mathbb I \backslash \mathbb J \in \mathscr U(f)}$. If 
$\mathscr S$ is empty, then $f$ is indecomposable and we are done, so assume 
$\mathscr S$ is not empty. Let $\mathcal C$ be a descending chain of sets from 
$\mathcal S$. We have 
$\bigcap_{\mathbb J \in \mathcal C} \mathbb J \in \mathscr U(f)$. On the other
hand, 
$\mathbb I \backslash (\bigcap_{\mathbb J \in \mathcal C} \mathbb J) = 
 \bigcup_{\mathbb J \in \mathcal C} (\mathbb I \backslash \mathbb J) \in
 \mathscr U(f)$ due to the fact that $\mathscr U(f)$ is upward closed.
Therefore, any descending chain of elements of $\mathcal S$ has a lower 
bound (intersection of the elements from the chain), and by 
(the dual version of) the Zorn lemma, $\mathcal S$ has a minimal element 
$\mathbb J_0$. We claim that $\mathbb J_0$ is the required subset of 
$\mathbb I$.

Indeed, assume that there is $\mathbb J \subseteq \mathbb J_0$ such that
both $\mathbb J$ and $\mathbb J_0 \backslash \mathbb J$ belong to 
$\mathscr U(f \circ p_{\mathbb I,\mathbb J_0})$. We have
$$
f_{\mathbb J} = 
f \circ p_{\mathbb I,\mathbb J_0} \circ p_{\mathbb J_0,\mathbb J} =
f \circ p_{\mathbb I,\mathbb J}
$$
which shows that $\mathbb J \in \mathscr U(f)$. Similarly, 
$\mathbb J_0 \backslash \mathbb J \in \mathscr U(f)$. But then, since 
$\mathscr U(f)$ is upward closed, 
$\mathbb I \backslash \mathbb J \in \mathscr U(f)$. Hence 
$\mathbb J \in \mathscr S$, and by the minimality of $\mathbb J_0$, we have
$\mathbb J = \mathbb J_0$. But then 
$\varnothing \in \mathscr U(f \circ p_{\mathbb I,\mathbb J_0})$, a contradiction
with the non-triviality of $A$.
\end{proof}

\subsection{Embedding into an ultraproduct}\label{ss-main}

Our main result is the following

\begin{theorem}
Let $\kappa$ be an infinite cardinal, $A$ a $\kappa$-subdirectly irreducible 
algebraic system, and $f: A \hookrightarrow \prod_{i\in \mathbb I} B_i$ an indecomposable
embedding into the direct product of a family of algebraic systems 
$\{B_i\}_{i\in \mathbb I}$. Then there is a $\kappa$-complete ultrafilter 
$\mathscr U$ on the set $\mathbb I$ such that the composition of $f$ with the 
canonical homomorphism $\prod_{i\in \mathbb I} B_i \to \prod_{\mathscr U} B_i$,
is an embedding.
\end{theorem}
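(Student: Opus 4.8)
The plan is to build the desired ultrafilter directly out of the combinatorial object $\mathscr U(f)$, which indecomposability makes into an ultrafilter on $\mathbb I$ in the first place. So the first step is to observe that, since $f$ is indecomposable, for every $\mathbb J\subseteq\mathbb I$ exactly one of $\mathbb J$ and $\mathbb I\setminus\mathbb J$ lies in $\mathscr U(f)$; combined with the Lemma (upward closure) this already gives most of the ultrafilter axioms. I would then verify the remaining one, closure under finite (indeed $<\kappa$) intersections, by exploiting the canonical isomorphisms $d$ and the hypothesis that $A$ is $\kappa$-subdirectly irreducible: if $\{\mathbb J_\alpha\}_{\alpha<\lambda}$ with $\lambda<\kappa$ are all in $\mathscr U(f)$ but $\mathbb J=\bigcap_\alpha \mathbb J_\alpha\notin\mathscr U(f)$, then $\mathbb I\setminus\mathbb J=\bigcup_\alpha(\mathbb I\setminus\mathbb J_\alpha)\in\mathscr U(f)$, so $f$ factors through $\prod_{i\notin\mathbb J}B_i$; but each $\mathbb I\setminus\mathbb J_\alpha$ is the complement of a set in $\mathscr U(f)$, hence \emph{not} in $\mathscr U(f)$, and I want to derive a contradiction by writing the index set $\mathbb I\setminus\mathbb J$ as a union of $\lambda<\kappa$ pieces and applying $\kappa$-subdirect irreducibility to the restricted embedding.

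More precisely, for the $\kappa$-completeness step I would argue as follows. Suppose $\mathbb J_\alpha\in\mathscr U(f)$ for $\alpha<\lambda$, $\lambda<\kappa$. Replacing each $\mathbb J_\alpha$ by $\bigcap_{\beta<\alpha}\mathbb J_\beta$ we may assume the chain is descending (here one uses the finite-intersection case, proved first from the $\lambda=2$ instance via $d_{\mathbb J,\mathbb I\setminus\mathbb J}$). Set $\mathbb K=\bigcap_{\alpha<\lambda}\mathbb J_\alpha$ and write $\mathbb I\setminus\mathbb K$ as the disjoint union of the sets $\mathbb L_\alpha=\mathbb J_\alpha\setminus\mathbb J_{\alpha+1}$ together with $\mathbb I\setminus\mathbb J_0$. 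Assume for contradiction $\mathbb K\notin\mathscr U(f)$; then $\mathbb I\setminus\mathbb K\in\mathscr U(f)$, so there is an embedding $g=f_{\mathbb I\setminus\mathbb K}\colon A\hookrightarrow\prod_{i\in\mathbb I\setminus\mathbb K}B_i$, and via the isomorphism identifying this product with $\prod_\alpha\prod_{i\in\mathbb L_\alpha}B_i$ (a product of fewer than $\kappa$ factors) $\kappa$-subdirect irreducibility yields a single index $\alpha_0$ with $\mathbb L_{\alpha_0}\in\mathscr U(g)$, hence $\mathbb L_{\alpha_0}\in\mathscr U(f)$. But $\mathbb L_{\alpha_0}\subseteq\mathbb I\setminus\mathbb J_{\alpha_0+1}$, so by upward closure $\mathbb I\setminus\mathbb J_{\alpha_0+1}\in\mathscr U(f)$, contradicting $\mathbb J_{\alpha_0+1}\in\mathscr U(f)$ and indecomposability. (A small point: one must also fold in the piece $\mathbb I\setminus\mathbb J_0$; if that is the chosen index, the same contradiction arises with $\mathbb J_0$ in place of $\mathbb J_{\alpha_0+1}$. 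If the chain is not long enough to perform the indexing cleanly one truncates at a limit using the descending property established in the first reduction.)

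Once $\mathscr U:=\mathscr U(f)$ is known to be a $\kappa$-complete ultrafilter, the last step is to check that the composition $A\overset{f}{\to}\prod_{i\in\mathbb I}B_i\to\prod_{\mathscr U}B_i$ is an embedding. The kernel of the canonical map $\prod_{i\in\mathbb I}B_i\to\prod_{\mathscr U}B_i$ identifies $b,b'$ exactly when $\{i: b(i)=b'(i)\}\in\mathscr U$; so $a,a'\in A$ are identified under the composite iff $\mathbb J_{a,a'}:=\{i: f(a)(i)=f(a')(i)\}\in\mathscr U(f)$, which by definition of $\mathscr U(f)$ means precisely that $f_{\mathbb J_{a,a'}}$ identifies $a$ and $a'$ — but $f_{\mathbb J_{a,a'}}$ is an embedding, so $a=a'$. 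Hence the composite is injective, and being a homomorphism it is an embedding.

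The main obstacle I anticipate is the bookkeeping in the $\kappa$-completeness step: turning an arbitrary $<\kappa$-indexed family in $\mathscr U(f)$ into a descending chain and then partitioning the complement of its intersection into $<\kappa$ disjoint blocks in a way compatible with the isomorphisms $d$, so that $\kappa$-subdirect irreducibility can be applied to a genuine ``product of $<\kappa$ systems'' and the single surviving block can be pushed back up through upward closure to contradict indecomposability. Everything else (the ultrafilter axioms other than $\kappa$-completeness, and the final injectivity check) is essentially a direct unwinding of definitions together with the already-proved Lemma.
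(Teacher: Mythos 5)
Your overall strategy coincides with the paper's: take $\mathscr U=\mathscr U(f)$, get upward closure from the Lemma and the dichotomy from indecomposability, use (finite) subdirect irreducibility for closure under intersections, and finish with the same kernel computation showing the composite into the ultraproduct is injective. The one place where you genuinely diverge is the $\kappa$-completeness step, and there your route is both more laborious and not watertight as written. The paper avoids chains altogether by proving the partition form of $\kappa$-completeness (for an ultrafilter, $\kappa$-completeness is equivalent to: every decomposition of $\mathbb I$ into fewer than $\kappa$ pairwise disjoint blocks has a block in the ultrafilter), which follows in one line from $\kappa$-subdirect irreducibility applied to $\prod_{i\in \mathbb I}B_i\simeq\prod_{j\in\mathbf I}\bigl(\prod_{i\in\mathbb I_j}B_i\bigr)$. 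Your reduction to a descending chain is circular as stated: to know that $\bigcap_{\beta<\alpha}\mathbb J_\beta\in\mathscr U(f)$ for $\alpha\ge\omega$ you already need the completeness you are trying to prove; and even granting the chain, the blocks $\mathbb L_\alpha=\mathbb J_\alpha\setminus\mathbb J_{\alpha+1}$ together with $\mathbb I\setminus\mathbb J_0$ do not cover those points of $\mathbb I\setminus\mathbb K$ whose least index of failure is a limit ordinal, and ``truncating at a limit'' does not repair this. The fix is easy and makes the chain unnecessary: for an arbitrary family set $\mathbb M_\alpha=(\mathbb I\setminus\mathbb J_\alpha)\setminus\bigcup_{\beta<\alpha}(\mathbb I\setminus\mathbb J_\beta)$; these partition $\mathbb I\setminus\mathbb K$ into fewer than $\kappa$ disjoint blocks with $\mathbb M_\alpha\subseteq\mathbb I\setminus\mathbb J_\alpha$, after which your contradiction via upward closure and indecomposability goes through verbatim (or, simpler still, apply the partition argument directly to $\mathbb I$ as the paper does). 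Everything else in your proposal --- the treatment of finite intersections via the disjoint decomposition of $\mathbb I$ and the final injectivity check --- matches the paper's proof.
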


Before we proceed with the proof, let us discuss the differences with Theorem 1 
from \cite{ultra}. The latter theorem states a necessary and sufficient 
condition, but one direction, namely, the implication (ii) $\Rightarrow$ (i)
there, is trivial. The major difference is that here we do not require $\kappa$
to be strongly compact. On the other hand, we assume that $f$ is indecomposable.
Note that in the case $\kappa = \omega$, Theorem 1 from \cite{ultra} is 
stronger, as $\omega$ is strongly compact by the definition of ultrafilters and
the Zorn lemma.

The proof given here is similar to the proof of Theorem 2 from \cite{ultra} (see
also Theorem 2.10 from the recent paper \cite{alg-univ} which uses a similar 
argument, but in a more restrictive setting), and is modelled after the proofs 
of Lemma 7 and Proposition 8 from \cite{bergman-g}.

\begin{proof}
We will show that we can take $\mathscr U = \mathscr U(f)$. We have 
$\mathbb I \in \mathscr U(f)$, so $\mathscr U(f)$ is not empty; also, 
$\mathscr U(f)$ is upward closed. 

Suppose that two sets $\mathbb J$ and $\mathbb K$ belong to $\mathscr U(f)$.
Then $\mathbb I \backslash \mathbb K \notin \mathscr U(f)$, and, since 
$\mathscr U(f)$ is upward closed, we have
$\mathbb J \backslash \mathbb K \notin \mathscr U(f)$. Similarly, 
$\mathbb K \backslash \mathbb J \notin \mathscr U(f)$. Further, since, say,
$\mathbb J \in \mathscr U(f)$ and $\mathscr U(f)$ is upward closed, we obtain
$\mathbb J \cup \mathbb K \in \mathscr U(f)$, and since $f$ is indecomposable,
$\mathbb I \backslash (\mathbb J \cup \mathbb K) \notin \mathscr U(f)$. 

The set $\mathbb I$ can be decomposed to the disjoint union of four sets:
$$
\mathbb I = (\mathbb I \backslash (\mathbb J \cup \mathbb K)) \cup
            (\mathbb J \backslash \mathbb K) \cup
            (\mathbb K \backslash \mathbb J) \cup
            (\mathbb J \cap \mathbb K) .
$$
As we have just seen, the first three sets in this decomposition do not belong 
to $\mathscr U(f)$, and, since $A$ is (finitely) subdirectly irreducible, the 
fourth set, $\mathbb J \cap \mathbb K$, belongs to $\mathscr U(f)$.

Let us build the corresponding map $f_{\mathbb J \cap \mathbb K}$ explicitly. We
have: 
\begin{equation}\label{eq-eq}
f_{\mathbb J} \circ p_{\mathbb J,\mathbb J \cap \mathbb K} = 
f_{\mathbb K} \circ p_{\mathbb K,\mathbb J \cap \mathbb K} .
\end{equation}
Indeed,
$$
f_{\mathbb J} \circ p_{\mathbb J,\mathbb J \cap \mathbb K} = 
f \circ p_{\mathbb I,\mathbb J} \circ p_{\mathbb J,\mathbb J \cap \mathbb K} =
f \circ p_{\mathbb I,\mathbb J \cap \mathbb K} ,
$$
and similarly,
$$
f_{\mathbb K} \circ p_{\mathbb K,\mathbb J \cap \mathbb K} = 
f \circ p_{\mathbb I,\mathbb J \cap \mathbb K} .
$$
Define the map $f_{\mathbb J \cap \mathbb K}$ to be the map (\ref{eq-eq}). The 
computation above shows that 
$f \circ p_{\mathbb I,\mathbb J \cap \mathbb K} = f_{\mathbb J \cap \mathbb K}$.

Therefore, $\mathscr U(f)$ is closed upward and with respect to intersections,
and hence is a filter on $\mathbb I$. Since $f$ is indecomposable, 
$\mathscr U(f)$ is an ultrafilter.

To prove that $\mathscr U(f)$ is $\kappa$-complete, it is enough to show that 
for any decomposition $\mathbb I = \bigcup_{j \in \mathbf I} \mathbb I_j$ into 
the union of pairwise disjoint subsets $\{\mathbb I_j\}_{j \in \mathbf I}$, 
where $|\mathbf I| < \kappa$, at least one of $\mathbb I_j$'s belongs to 
$\mathscr U(f)$. But this follows directly from the $\kappa$-subdirect 
irreducibility of $A$: since 
$$
\prod_{i\in \mathbb I} B_i \simeq
\prod_{j \in \mathbf I}\Big(\prod_{i\in \mathbb I_j} B_i\Big) ,
$$
there is $j_0 \in \mathbf I$ such that $\mathbb I_{j_0} \in \mathscr U(f)$.

Factoring the embedding $f$ by the congruence 
$$
\theta= 
\set{(a,b) \in (\prod_{i\in \mathbb I} B_i) \times (\prod_{i\in \mathbb I} B_i)}
{\set{i\in \mathbb I}{a(i) = b(i)} \in \mathscr U(f)}
$$
defining the ultraproduct $\prod_{\mathscr U(f)} B_i$, we get the commutative 
diagram
$$
\begin{tikzcd}[column sep=large,row sep=large]
A  \arrow{d} \arrow{r}{f} & \prod\limits_{i\in \mathbb I} B_i \arrow{d} \\ 
A/(\theta \cap (A \times A)) \arrow{r} & \prod\limits_{\mathscr U(f)} B_i 
\end{tikzcd}
$$
where the vertical arrows are the canonical quotients by the respective 
congruences, and the bottom horizontal arrow is also an embedding. 

But if $(a,b) \in \theta \cap (A \times A)$, then $a,b\in A$, and, by the 
definition of $\mathscr U(f)$, $A$ is embedded into 
$\prod_{\set{i \in \mathbb I}{a(i) = b(i)}} B_i$. But then $a=b$, hence the 
congruence $\theta \cap (A \times A)$ coincides with the diagonal $\Delta(A)$, 
and $A$ is embedded into $\prod_{\mathscr U(f)} B_i$, as required.
\end{proof}

\subsection{Remark}\label{ss-remark}

The final remark concerns the ``classical'' case $\kappa = \omega$. Recall a 
corollary to this particular case of Theorem 1 from \cite{ultra}: if in a 
variety $\mathfrak A$ any free system is finitely subdirectly irreducible, then
an algebraic system $A \in \mathfrak A$ does not satisfy any nontrivial identity
within $\mathfrak A$ if and only if any free system in $\mathfrak A$ embeds into
an ultrapower of $A$. The corollary is obtained by a straightforward combination
of Theorem 1 with (the proof of) the Birkhoff theorem asserting an embedding of a free system in 
$\mathfrak A$ into a direct power of $A$.

This corollary could be interesting in the context of Universal
Algebraic Geometry (UAG for short), an attempt to generalize the ``classical'' 
algebraic geometry over polynomial rings to a geometry over arbitrary varieties
of algebraic systems. It was developed during the last two decades by B.~Plotkin
and his collaborators from one side, and by E.Yu.~Daniyarova, A.G.~Myasnikov and
V.N.~Remeslennikov from the other side (see, respectively, \cite{plotkin} and 
\cite{dmr}, and references therein). Indeed, the condition that a given 
algebraic system does not satisfy nontrivial identities within an ambient 
variety $\mathfrak A$, is equivalent to duality of certain categories -- of 
coordinate algebras and of algebraic sets -- defined in terms of free algebras 
of $\mathfrak A$ (\cite[Proposition 2.2]{plotkin}); this duality, in the 
parlance of UAG, means that syntax corresponds to semantics 
(\cite[Propositions 3.14 and 3.15]{plotkin}). On the other hand, the 
``unification theorem'' of UAG (\cite[Theorem 1]{dmr}) gives, under certain 
assumptions, equivalent conditions for embedding of an algebraic system into an
ultraproduct of another algebraic system.

\subsection*{Acknowledgements}

Thanks are due to Eugene Plotkin for useful remarks. The work was supported by
grant AP09259551 of the Ministry of Education and Science of the Republic of 
Kazakhstan.

\end{document}